\theoremstyle{plain}
\newtheorem{Thm}{Theorem}[section]
\newtheorem{Pro}[Thm]{Proposition}
\newtheorem*{MConj}{Modularity conjecture}
\theoremstyle{definition}
\theoremstyle{remark}
\newtheorem{Rem}[Thm]{Remark}
\numberwithin{equation}{section}
\newcommand{\ITE}[3]{\ifthenelse{#1}{#2}{#3}}\newcommand{\ITEE}[4][]{\ITE{\equal{#2}{#3}}{#4}{#1}}
\newcommand{\myData}[1][]{
 \author[D.\ Burek]{Dominik Burek}
 \address{\ITEE{#1}{*}{D.\ Burek{}\\{}}%%
  Instytut Matematyki\\{}Wydzia\l{} Matematyki i~Informatyki\\{}Uniwersytet Jagiello\'{n}ski\\{}%%
  ul.\ \L{}ojasiewicza 6\\{}30-348 Krak\'{o}w\\{}Poland}
 \email{dominik.burek@student.uj.edu.pl}
 }
\newcommand{\PP}{\mathbb{P}}
\begin{document}
\title{Rigid realizations of modular forms in Calabi--Yau threefolds}
\myData

\begin{abstract}
We construct examples of modular rigid Calabi--Yau threefolds, which give a realization of some new weight 4 cusp forms.
\end{abstract}

\subjclass[2010]{Primary 14J32; Secondary 11F03, 14J15.}
\keywords{Calabi--Yau threefolds.}
%\thanks{This work was partially supported by the grant 346300 for IMPAN from the Simons Foundation and the matching 2015-2019 Polish MNiSW fund.}
\maketitle

\section{Introduction}

The aim of this note is to construct new rigid Calabi--Yau realizations of weight 4 cusp forms as a resolution of a quotient of non--rigid examples listed by C.\ Mayer in \cite{Meyer}.

Recall that a smooth, projective threefold is called a \emph{Calabi--Yau} threefold if it satisfies the following two conditions:
\begin{enumerate}[\upshape(i)]
\item The canonical bundle of $X$ is trivial and
\item $H^{1}(X,\mathcal{O}_{X})=H^{2}(X,\mathcal{O}_{X})=0$.
\end{enumerate} 
A Calabi--Yau threefold is said to be \emph{rigid} if it has no infinitesimal deformations i.e. $H^{1}(X,\mathcal{T}_{X})=0.$
By Serre duality it is equivalent to the vanishing of $H^{2}(X,\Omega_{X})$ or $h^{2,1}(X)=0.$

A lot of research on Calabi--Yau threefolds was motivated by the following conjecture:

\begin{MConj} Any rigid Calabi--Yau threefold $X$ defined over $\mathbb{Q}$ is modular i.e. its $L$--series $L(X, s)$
coincides up to finitely many Euler factors with the Mellin transform $L(f, s)$ of a
modular cusp form $f$ of weight $4$ with respect to $\Gamma_{0}(N),$ where the level $N$ is only
divisible by the primes of bad reduction. \end{MConj}

In \cite{DM} and \cite{DMimprove} L.\ Dieulefait and J.\ Manoharmayum proved the modularity conjecture for all rigid Calabi--Yau threefold defined over $\mathbb{Q}$ satisfying some mild assumptions on primes of bad reduction. 

Finally the conjecture was obtained in \cite{YuiG} from the proof of Serre's conjecture given by C.\ Khare and J.-P.\ Wintenberger; see \cite{KhareWint1} and \cite{KhareWint2}.
However the question asked by B.\ Mazur and D.\ van Straten --- which modular forms can be realized as a modular form of some rigid Calabi--Yau threefold, is still open. 
%However an explicit correspondence between rigid Calabi--Yau threefolds and weight 4 cusp forms is still rare and ubiquitous.\par

\subsection*{Acknowledgments} This paper is a part of author's master thesis. I am deeply grateful to my advisor S\l{}awomir Cynk for recommending me to learn this area and his help.

\section{Involutions of double octic Calabi--Yau threefolds}

Let $S\subset \PP^{3}$ be an arrangement of eight planes such that no
six intersect and no four contain a line.
Then there exists a resolution of singularities $X$ of the double
covering of $\PP^{3}$ branched along $S$ which is a Calabi--Yau 
threefold.

A crepant resolution described in \cite{CSz} is not uniquely determined but
depends on the order of double lines. We can avoid this difficulty
replacing the blow--up of all double lines separately with the blow--up
of their sum in the singular double cover. Now, by the universal
property of a blow--up, every automorphism $\phi$ of the projective space
$\PP^{3}$ lifts to an automorphism $\widetilde\phi$ of the double octic
$X$.

Any automorphism of a Calabi--Yau threefold acts on a canonical form $\omega$ of
$X$ by a multiplication with a scalar $\omega\mapsto \mu\omega$.
If the automorphism has finite order $n$ then $\mu^{n}=1$. We shall
call an automorphism \emph{symplectic} if it preserves the canonical
form $\omega$ i.e. $\mu=1$.

We shall investigate in more details the case of an automorphism 
of order two. The fixed locus $\operatorname{Fix}(f)$ of a symplectic
involution of a Calabi--Yau threefold is a disjoint union of smooth
curves, while the fixed locus of a non--symplectic involution is a
disjoint union of smooth surfaces and isolated points. 

\begin{Pro}[]{}\label{main} Let $\phi$ be an automorphism of the
  projective space $\PP^{3}$ of order two such that 
\begin{enumerate}[\upshape(i)]
\item the fixed locus of $\phi$ contains no double nor triple line of $S,$
\item\label{fourplanes} planes intersecting in fourfold point are not invariant by $\phi$,
\item a fixed line of $\phi$ intersects $S$ with odd multiplicity in
  at most two points.
\end{enumerate} Then the fixed locus of the induced involution $\widetilde\phi\colon X\to X$
contains no curve with positive genus.\end{Pro}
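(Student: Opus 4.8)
The plan is to analyze the fixed locus of $\widetilde\phi$ on $X$ by pulling back the geometry to $\PP^3$ via the double-cover structure. Let $\pi\colon X \to \PP^3$ denote the (resolved) double cover branched along $S$, and let $\tau$ be the covering involution. The involution $\phi$ of $\PP^3$ of order two has fixed locus which is either a plane and a point, or two skew lines, depending on the eigenvalue decomposition of the corresponding element of $\mathrm{PGL}_4$ (its $\pm 1$ eigenspaces). The first step is therefore to classify $\Fix(\phi) \subset \PP^3$ into these standard types and to understand which points of $X$ lie over $\Fix(\phi)$ — a point $x \in X$ can be fixed by $\widetilde\phi$ only if $\pi(x)$ is fixed by $\phi$, since $\pi$ is equivariant. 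So the image $\pi(\Fix(\widetilde\phi))$ is contained in $\Fix(\phi)$, and over each fixed plane or fixed line I must determine the actual fixed points upstairs.

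**The key geometric mechanism** is that over a point $p \in \Fix(\phi)$, the fiber $\pi^{-1}(p)$ consists of two points swapped by $\tau$ (if $p \notin S$) or one point (if $p \in S$, away from the exceptional locus). Since $\widetilde\phi$ commutes with $\tau$, on a fiber of two points $\widetilde\phi$ either fixes both or swaps them; whether it fixes them is governed by the sign $\mu = \pm 1$ of the action on $\omega$ combined with the local branch data. The crucial observation is that the defining equation of the double cover is $w^2 = f$ where $f$ is the octic cutting out $S$; the lift $\widetilde\phi$ acts as $w \mapsto \pm w$ and $\phi$ on the base, and on a fixed plane $\Pi$ of $\phi$ the restriction $f|_\Pi$ gives a double cover of $\Pi \cong \PP^2$ branched along a plane octic curve $C = S \cap \Pi$. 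Thus the fixed-surface component of $\Fix(\widetilde\phi)$ over $\Pi$ is a double cover of $\PP^2$ branched along $C$, and the fixed-curve components arise over fixed lines.

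**The heart of the argument** — and the main obstacle — is condition (iii): showing that the fixed curves over a fixed line $\ell$ of $\phi$ have genus zero. Over $\ell \cong \PP^1$, the restriction $f|_\ell$ defines a double cover of $\PP^1$ branched at the points $\ell \cap S$ counted with odd multiplicity (even-multiplicity intersections do not create branch points after normalization). By Hurwitz, the genus of this double cover is $g = \tfrac{1}{2}(b - 2) + 1$ where $b$ is the number of branch points; hypothesis (iii) forces $b \le 2$, giving $g = 0$. I would need to verify carefully that the resolution of singularities does not introduce higher-genus curves into the fixed locus — this is where conditions (i) and (ii) enter, ensuring that the exceptional divisors over the double and triple lines and the fourfold points either are not fixed by $\widetilde\phi$ or contribute only rational fixed curves. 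The delicate point is that blowing up the double lines of $S$ replaces them with ruled surfaces (or conic bundles), and I must check that $\widetilde\phi$ restricted to any fixed exceptional component has only rational curves fixed; condition (i) guarantees the fixed locus of $\phi$ avoids these lines so that no fixed exceptional surface maps into $\Fix(\phi)$ problematically, while condition (ii) handles the fourfold points.

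**To organize the proof**, I would proceed case by case according to the type of $\Fix(\phi)$ in $\PP^3$ (plane-plus-point, or pair of skew lines), and within each case stratify $X$ over the strata of $S$ (smooth locus, double lines, triple lines, fourfold points, and their exceptional divisors). For each stratum I compute the local action of $\widetilde\phi$ and identify the one-dimensional fixed components, applying the Hurwitz/branch-point count above and invoking (i)--(iii) exactly where needed to cap the number of odd-multiplicity branch points at two. The expected conclusion is that every fixed curve is a double cover of $\PP^1$ branched in at most two points, hence rational, so no fixed curve of positive genus occurs.
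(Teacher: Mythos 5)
Your overall strategy --- stratify $X$ over the strata of $S$ and its resolution, and over each fixed line of $\phi$ use the branch-point count for a double cover of $\PP^1$ together with hypothesis (iii) to force rationality --- is exactly the strategy of the paper, and your treatment of the generic stratum (fixed curves of the singular double cover lying over fixed lines of $\phi$) is essentially the paper's first step. One slip there: for a double cover of $\PP^1$ branched at $b$ points Riemann--Hurwitz gives $g=(b-2)/2$, not $\tfrac12(b-2)+1$; with the formula as you wrote it, $b\le 2$ would not yield $g=0$, so the conclusion you state only follows from the corrected formula. The genuine gap, however, is that the heart of the proof is precisely the part you defer with ``I would need to verify carefully'' and ``I must check'': the fixed curves inside the exceptional divisors of the resolution. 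None of that follows formally from (i)--(iii); each stratum requires its own configuration argument, and these arguments constitute most of the paper's proof.

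Concretely, three things are missing or misassigned. First, your stratification omits the fivefold points entirely (the arrangement hypotheses allow five planes through a point); their exceptional $\PP^2$'s are \emph{added to the branch divisor}, and one must argue that the induced involution there is not the identity (else a fixed divisor appears) and that no new double line in that $\PP^2$ can be fixed, since a fixed line plus an isolated fixed point would force the remaining lines of the configuration to be concurrent, which the arrangement hypotheses forbid. Second, hypothesis (ii) is not where you place it: in the paper it is used at the \emph{triple lines}, where a fixed new double line $k_i$ in the exceptional divisor would produce four $\phi$-invariant planes, contradicting (ii); the \emph{fourfold points} are instead handled by the standing assumption that no four planes of $S$ contain a line (a fixed line in the exceptional $\PP^2$ forces the four branch lines, hence the four planes, to meet along a line), and there the exceptional plane is \emph{not} added to the branch divisor --- a distinction your uniform ``$w^2=f$'' picture does not capture. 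Third, hypothesis (i) serves to keep double and triple lines out of $\operatorname{Fix}(\phi)$ so that the final blow-up of the sum of double lines only contributes fibers (lines or triples of concurrent lines), i.e.\ rational curves. As written, your proposal is a correct plan whose decisive verifications are left open, and with the roles of (i) and (ii) interchanged those verifications would not close as stated.
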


\begin{proof}
  Our strategy is to perform the resolution and after each step
  verify that the fixed locus of the lifting of $\phi$ to the partial
  resolution contains no irrational curves.
  We start with the (singular) double covering of $\mathbb P^{3}$
  branched along $S$. Any fixed curve of the involution must be mapped
  to a fixed line of $\phi$ in $\mathbb P^{3}$, if this curve  is
  irrational then, by the Riemann--Hurwitz formula, the line intersects
  $S$ with odd multiplicity in at most four points.
  This contradicts the second assumption, which in fact is necessary.

  The next step of the resolution is the blowing up of all fivefold
  points in the base of double covering, new branch divisor is the strict transform
  of $S$ plus the exceptional divisors. So any fixed curve of the
  involution is either birational to a fixed line in previous
  step or is a fixed line of the induced involution on a projective
  plane (isomorphic to an exceptional plane of the blow--up).
  At this step we introduce new double lines: intersections of the
  exceptional locus with strict transforms of the five planes.
  
  The involution is not the identity on the $\mathbb P^{2}$ because
  otherwise we will get a fixed divisor in the double octic.
  If one of the double lines is fixed (dashed line $l_{1}$ in the picture below), then the fixed locus on $\mathbb
  P^{2}$ consists of $l_{1}$ and an isolated point. On the other hand
  on each line $l_{2}, \ldots, l_{4}$ we have at least two fixed points, hence the other four lines
  intersect, which is impossible. 
  \begin{center}\includegraphics[width=5cm]{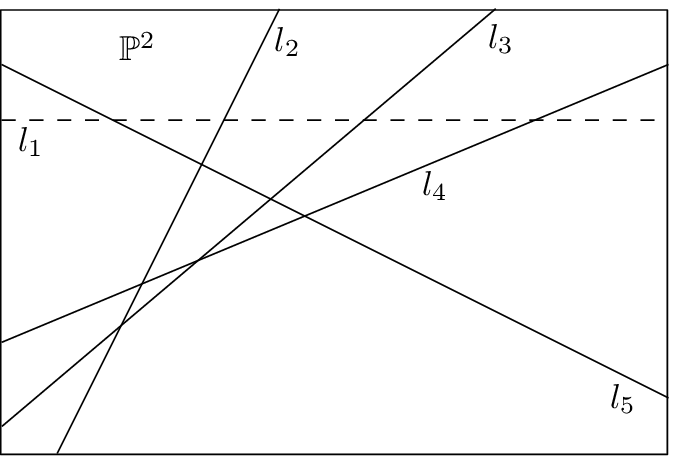}\end{center}\par

In the next step of resolution we blow up triple lines and as in the
case of a fivefold point we add the exceptional divisors to the branch divisor. In the
exceptional divisor we get the following
configuration: 
\begin{center}\includegraphics[width=5cm]{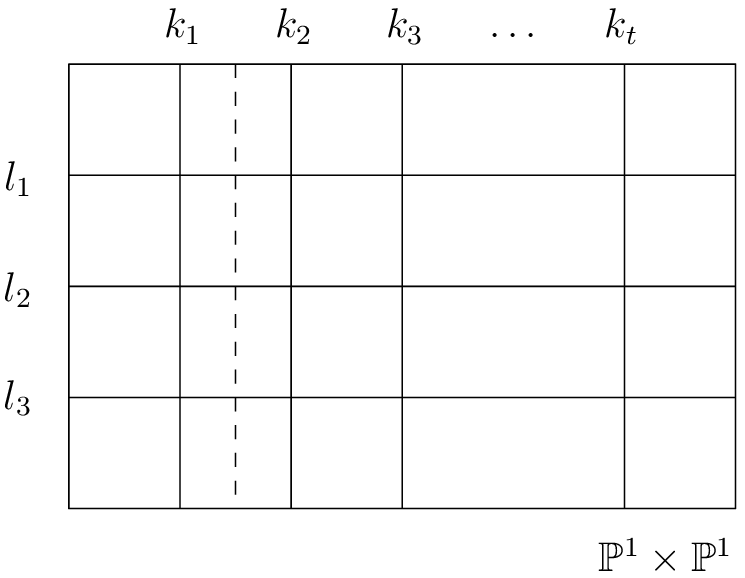}\end{center}
where $t$ denotes the number of fourfold points on that triple line.

Consequently any fixed curve is a vertical line (dashed line in the
picture) and so it is rational. Again we introduce new double lines: $l_{1},$ $l_{2},$ $l_{3}$ and $k_{1},$ $k_{2},$ $\ldots,$ $k_{t}$ (solid lines in the picture). Suppose that the line $k_{i}$ is fixed. 
Then we get four invariant planes: three planes containing $k_{i}$ and one transversal to them, which leads to the contradiction with assumption \hyperref[fourplanes]{(ii)}.

Now, we blow--up all fourfold points, this time we do not add the
exceptional divisors to the branch divisor. 
In the exceptional divisor over a fourfold point $P$ we get the following configuration:
\begin{center}\includegraphics[width=5cm]{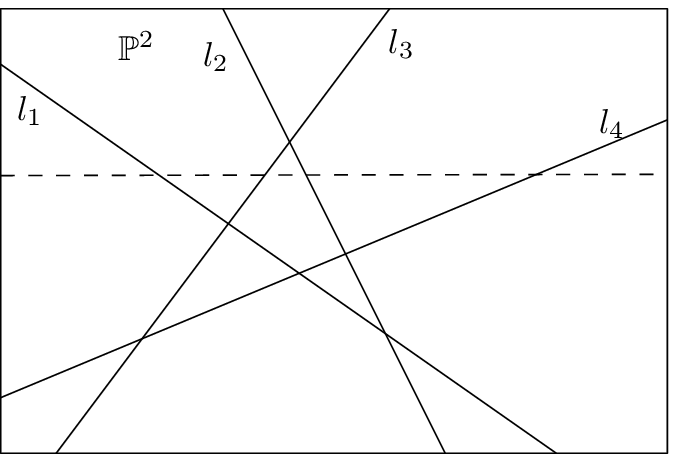}\end{center}
and in the double cover we get a double covering of $\mathbb P^{2}$
branched along the four solid lines. Any irrational fixed curve of the
involution maps to a fixed line of involution on $\mathbb P^{2}$ (a
dashed line in the picture), it is not of the other lines
$l_{1},\dots,l_{4}$ because over each of them the double cover is an
isomorphism.

In particular the involution on $\mathbb P^{2}$ is not
identity, its fixed locus consists of  the dashed line and an isolated
point. As an involution of a projective line has two fixed
points the  lines $l_{1},\dots,l_{4}$ intersect and consequently the
four planes passing through the fourfold point intersect along a line,
which contradicts our assumptions. 

Now the singular locus of the branch divisor is a union of double lines, some
of them were introduced in the previous step, but none of them is
fixed by the involution.   
In the last step we blow--up the sum of the double lines in the (singular)
cover, a fiber of the blow--up is a line at a double point or a sum
of three concurrent lines at a triple points. Since every fixed line is
birational to a fixed line from the previous step of the resolution or
a line (component of a fiber) the assertion of the proposition follows.

\end{proof}

\section{Main result}
In \cite{Meyer} C. Meyer gave a list of 63 one--parameter families of double octics $\{X_{\tau}\}_{\tau\in\PP^1}$ with $h^{2,1}(X_{\tau})=1$. Using an extensive computer search he found 18 examples in 11 families (see \cite{Meyer}, table on page 54) such that for all primes $p\le97$ the trace of the Frobenius map $\operatorname{Frob}_p$ equals $a_p+pb_p$, where $a_p$ and $b_p$ are the coefficients of cusp forms $f_4$ and $f_2$ of weight 4 and 2, respectively. This gives a strong numerical evidence of the modularity of $X$ in the following sense: the semi-simplification of the Galois representation on $H^3(X)$ decomposes into two-dimensional pieces isomorphic to Galois representations associated to $f_4$ and $f_2$. 
Equivalently, the $L$ series of $X$ factors as $L(X,s)=L(f_4,s)L(f_2,s-1)$ (see \cite{HulekV1} and \cite{HulekV2}).
In fact modularity of all examples except Arr. no. 154 was proved in \cite{CM}. 

\begin{Rem}\label{remark} Different elements from Meyer's table with the same arrangement type are in fact isomorphic with a quadratic twist compatible with twists $\lambda$ in table \cite{Meyer}. For an Arr. no. 4, from \cite{CKC} we find that the map
$$\left( \begin{matrix} x \\ y \\ z \\ t \\ u \end{matrix} \right) \mapsto \left( \begin{matrix} ABy+ABz \\ -ABy \\ \left( -{A}^{2}+AB \right) x+ \left( -{A}^{2}+AB \right)y-{A}^{2}z+ \left({A}^{2}-AB \right) t \\ \left( AB-{B}^{2} \right) t \\ A^3B^3(A-B)^2u\end{matrix} \right)$$
which gives an isomorphism (over $\mathbb{Q}$) between $X_{(A:B)}$ and the quadratic twist of $X_{(A-B:A)}$ by $-A(A-B)$. In particular $X_{(1:-1)}$ and $X_{(1:2)}$ are isomorphic, they are also isomorphic to the quadratic twist of $X_{(2:1)}$ by $-2.$ Since the cusp forms 32k4A1 and 32A1 have CM by $\sqrt{-1}$ the quadratic twists by $2$ and $-2$ coincide. Similar transformations exists also for Arr. no 13, 249 and 267.
 \end{Rem}

\begin{Thm}\label{main2} Let $X$ be a Calabi--Yau threefold from Meyer's table corresponding to Arr. no. 4, 13, 21, 53, 244, 267, 274. Then there exists a symplectic involution $\phi$ on $X$ and a crepant resolution $Y$ of a quotient $X/\phi,$ which is a rigid, modular Calabi--Yau threefold.\end{Thm}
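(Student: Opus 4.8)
The plan is to handle all seven arrangements by the same scheme, the arrangement-specific work being the explicit production of the involution. For each of Arr.\ no.\ 4, 13, 21, 53, 244, 267, 274 I would first exhibit an order-two element $\phi\in\mathrm{PGL}_4(\mathbb{Q})$, given by an explicit matrix $A$, that permutes the eight planes of the corresponding arrangement $S$; by the universal property of the blow--up recalled in Section 2 this lifts to an involution $\widetilde\phi$ of the double octic $X$. Next I would check that $\widetilde\phi$ is \emph{symplectic}: writing $w^{2}=f_{8}$ for the cover, the pullback of $\omega$ is multiplied by a scalar built from $\det A$ and the factor $c$ with $f_{8}\circ\phi=c\,f_{8}$, and I would choose $\phi$ so that this scalar is $1$. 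Finally I would verify, arrangement by arrangement, that $\phi$ satisfies hypotheses (i)--(iii) of Proposition~\ref{main}. Since $\widetilde\phi$ is then a symplectic involution, its fixed locus is a disjoint union of smooth curves, and Proposition~\ref{main} forces each of them to have genus zero; hence $\Fix(\widetilde\phi)$ is a disjoint union of smooth rational curves.

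The second step is the quotient. Locally near a fixed curve a symplectic involution has the form $(x,y,z)\mapsto(-x,-y,z)$, so $X/\phi$ has only transverse $A_{1}$ (compound du Val) singularities along the images of these rational curves, and nowhere else. Blowing up the singular locus produces a crepant resolution $Y\to X/\phi$, which is defined over $\mathbb{Q}$ because $\phi$ is. Crepancy gives $K_{Y}=0$, and since $h^{1,0}(X)=h^{2,0}(X)=0$ while the exceptional divisors are $\PP^{1}$-bundles over rational curves (rational surfaces, hence with $h^{1,0}=h^{2,0}=0$), one obtains $h^{1,0}(Y)=h^{2,0}(Y)=0$. Thus $Y$ is a Calabi--Yau threefold.

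It remains to show $Y$ is rigid, i.e.\ $h^{2,1}(Y)=0$. The exceptional $\PP^{1}$-bundles over rational curves contribute only to $H^{1,1}$, so $H^{2,1}(Y)\cong H^{2,1}(X)^{\widetilde\phi}$, and as $h^{2,1}(X)=1$ the task reduces to proving that $\widetilde\phi$ acts as $-1$ on the one-dimensional space $H^{2,1}(X)$. Here I would invoke the modularity of $X$ from Section 3: up to semisimplification $H^{3}(X)$ splits as $M_{f_{4}}\oplus M_{f_{2}}(-1)$, where the rank-two piece $M_{f_{4}}=H^{3,0}\oplus H^{0,3}$ carries the canonical form and $M_{f_{2}}(-1)=H^{2,1}\oplus H^{1,2}$ accounts for the factor $L(f_{2},s-1)$. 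Being symplectic, $\widetilde\phi$ acts trivially on $M_{f_{4}}$, while commutation with the Hodge and Hecke structure forces it to act by a scalar $\varepsilon=\pm1$ on the irreducible piece $M_{f_{2}}(-1)$, so that $\operatorname{tr}(\widetilde\phi^{*}\mid H^{3}(X))=2+2\varepsilon$. The crux is to show $\varepsilon=-1$, and I expect this to be the main obstacle. I would settle it by the topological Lefschetz fixed point formula, which gives $\chi(\Fix(\widetilde\phi))=2+2\operatorname{tr}(\widetilde\phi^{*}\mid H^{2}(X))-\operatorname{tr}(\widetilde\phi^{*}\mid H^{3}(X))$; since the left side equals twice the number of fixed rational curves, this pins down $\varepsilon$ once the trace on $H^{2}(X)$ (computed from the action of $\phi$ on the divisor classes of the explicit resolution) and the number of fixed curves are known.

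Granting $\varepsilon=-1$, we get $H^{2,1}(X)^{\widetilde\phi}=0$, hence $h^{2,1}(Y)=0$ and $Y$ is rigid. Being a rigid Calabi--Yau threefold over $\mathbb{Q}$, $Y$ is then modular by the result deduced from Serre's conjecture (\cite{YuiG}); moreover $H^{3}(Y)\cong H^{3}(X)^{\widetilde\phi}=M_{f_{4}}$, so $Y$ realizes precisely the weight-$4$ cusp form $f_{4}$ attached to $X$ in Meyer's table. The compatibility of the isomorphisms and quadratic twists recorded in Remark~\ref{remark} then lets one move between the different members of each arrangement type and match up the resulting forms, completing the argument for all seven cases.
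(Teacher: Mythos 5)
Your outline agrees with the paper's skeleton up to the construction stage (explicit involution in coordinates, symplecticity via the determinant-over-$u$-coefficient scalar, verification of the hypotheses of Proposition~\ref{main}, crepant resolution of the $A_1$-curves, and the observation that the exceptional ruled surfaces over rational curves contribute nothing to $H^{2,1}(Y)$, so that $H^{2,1}(Y)\cong H^{2,1}(X)^{\widetilde\phi}$ --- this is exactly the Chen--Ruan orbifold formula the paper cites, with $H^{0,1}(C)=0$ killed by Proposition~\ref{main}). But the crux, which you yourself flag as ``the main obstacle,'' is never actually resolved: you must show that $\widetilde\phi$ acts by $-1$ on the one-dimensional space $H^{2,1}(X)$, and your proposal only reduces this to a Lefschetz fixed-point computation whose inputs --- the trace of $\widetilde\phi^{*}$ on $H^{2}(X)$ (a space of large dimension, requiring one to track how the involution permutes all exceptional divisors of the resolution) and the number of fixed rational curves --- are left uncomputed, case by case, for all seven arrangements. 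As it stands the rigidity of $Y$, which is the entire content of the theorem, is not established.

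The idea you are missing is that the involution should not be produced as an isolated symmetry of the single threefold $X$, but as the specialization of a family of isomorphisms. Each $X$ sits in a one-parameter family $\{X_{\tau}\}_{\tau\in\PP^1}$ with $h^{2,1}(X_{\tau})=1$, and the paper exhibits a M\"obius transformation $g$ of the base together with isomorphisms $\widetilde{\phi_{\tau}}\colon X_{\tau}\to X_{g(\tau)}$; the relevant $X$ is the fiber over a fixed point $\tau_{0}$ of $g$ chosen so that $g'(\tau_{0})=-1$. Then the Kodaira--Spencer map identifies $H^{2,1}(X_{\tau_{0}})\cong H^{1}(X_{\tau_0},\mathcal{T}_{X_{\tau_0}})$ with the tangent line to the base at $\tau_{0}$, on which $\widetilde{\phi_{\tau_{0}}}$ acts by $g'(\tau_{0})=-1$. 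This gives $H^{2,1}(X_{\tau_{0}})^{\langle\widetilde{\phi_{\tau_{0}}}\rangle}=0$ uniformly in all seven cases, with no trace or fixed-curve counts needed; your Lefschetz route is not wrong in principle, but it replaces a one-line uniform argument by a heavy case-by-case computation that your proposal does not carry out. (A second, smaller point: the existence of such an involution is special to the fibers over fixed points of $g$ --- a generic member of the family has none --- so the family structure is also what explains why the particular members in Meyer's table, up to the isomorphisms and twists of Remark~\ref{remark}, admit the required $\phi$ at all.)
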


\begin{proof} By the remark \ref{remark} for each arrangement type in the theorem we can consider only one example. For all examples, we shall find a transformation $g\colon \PP^{1}\to \PP^{1}$, which has an isolated fixed point $\tau_{0}$ such that $g'(\tau_{0})=-1$ and isomorphisms $\widetilde{\phi_{\tau}}\colon X_{\tau}\to X_{g(\tau)}$ for which $\widetilde{\phi_{\tau_{0}}}$ is a symplectic involution satisfying conditions of the Proposition \ref{main}. 

In the table \hyperref[Tab]{Tab.1} we give an involution $\phi_{\tau}$ and the transformation $g$.

\begin{table}[h!]
\centering
\def\arraystretch{1.7}
\resizebox{\columnwidth}{!}{%
\begin{tabular}{c|c|c|l|cl}
\hline
Arr. no. & $g$ & $\tau_{0}$ & \multicolumn{1}{c|}{$\phi_{\tau}(x,y,z,t,u)$} & Cusp form  \\
\hline\hline
4 & $1/\tau$ & $-1$ & $\begin{aligned}\big{(}Ay+Az, -Ay, Ax + Ay, Bt, A^3Bu\big{)}\end{aligned}$ &  32k4A1 \\
\hline
13 & $1/\tau$ & $1$ & $\big{(}Bz, By, Bx, At, -AB^3u\big{)}$ &  32k4A1 \\
\hline
21 & $(-1-\tau)/\tau$ & $-1/2$ & $\begin{aligned}&\big{(}(A+B)x+(A+B)y, (-A-B)y,\\ &(-A-B)z, Ax+(A+B)y+Bt,\\ &(A+B)^3Bu\big{)} \end{aligned}$ &  32k4B1  \\
\hline
53 & $1/\tau$ & $1$ & $\left(Ay, Ax, -Bt, -Bz, A^2B^2u\right)$ &  32k4B1  \\
\hline
244 & $1/\tau$ & $-1$ & $\left(Bx, By, At, Az, -B^2A^2u\right)$ &  12k4A1  \\
\hline
267 & $1/\tau$ & $-1$ & $(t,-z,-y,x,u)$ &  96k4B1 \\
\hline
274 & $1/\tau$& $1$ & $\left(z,-t,x,-y,u\right)$ &  96k4E1 \\
\hline
\end{tabular}
\caption{Tab. 1}\label{Tab}
}
\end{table}

Since $\tau_{0}$ is a fixed point of $g$ the weighted projective transformation $\phi_{\tau_0}$ induces an automorphism $\widetilde{\phi_{\tau_0}}$ of $X_{\tau_0}$ that transform $\omega$ to $\lambda \omega$ where $\lambda$ is the quotient of the determinant of the transformation on $\PP^3$ by the coefficient in front of $u$. For each of the cases in the table \hyperref[Tab]{Tab.1} we compute $\lambda=1$, so the involution $\widetilde{\phi_{\tau_0}}$ is symplectic.

The action induced by $\widetilde{\phi_{\tau_0}}$ on $H^{1,2}(X_{\tau_0})$ is given, via the Kodaira--Spencer map, by the multiplication by $g'(\tau_{0}),$ thus $H^{1,2}(X_{\tau_{0}})^{\langle \widetilde{\phi_{\tau_{0}}}\rangle}=0.$ The quotient $X_{\tau_{0}}/\phi_{\tau_{0}}$ admits a crepant resolution $Y_{\tau_{0}}$ by blowing--up of double curves. From the special case of an orbifold formula (see \cite{CR}) we have the following formula
$$H^{1,2}(Y_{\tau_{0}})\simeq H^{1,2}(X_{\tau_{0}})^{\langle \phi_{\tau_{0}}\rangle} \oplus \bigoplus_{C\in \operatorname{Fix}(\phi_{\tau_{0}})} H^{0,1}(C),$$ where $\operatorname{Fix}(\phi_{\tau_{0}})$ consists of curves, which are fixed by the involution $\widetilde{\phi_{\tau_{0}}}.$ Proposition \ref{main} yields that $Y_{\tau_{0}}$ is a rigid Calabi--Yau manifold defined over $\mathbb{Q}.$
From the involution we get a monomorphism $H^3(Y_{\tau_0})\to H^3(X_{\tau_0})$, so the modularity (and the cusp form) of $Y_{\tau_{0}}$ follows from the modularity of $X_{\tau_{0}}$.\end{proof}

\begin{Rem} According to our knowledge the last two examples in \ref{main2} give first rigid Calabi--Yau realizations of modular forms 96k4B1 and 96k4E1.\end{Rem}

\end{document}